\documentclass[12pt,a4paper,reqno]{amsart}
\usepackage{ amssymb, amsmath, enumerate, amsfonts, amsthm, mathrsfs, url, bm, mathtools}
\usepackage{amsmath}
\allowdisplaybreaks[4]
\usepackage{amsfonts}
\usepackage{bm}
\usepackage{soul}
\usepackage{ulem}
\usepackage{amssymb,amsthm,amsfonts,amsthm,latexsym,enumerate,url,cases}
\usepackage{enumerate, latexsym, amsmath, amsfonts, amssymb, amsthm, arydshln, color, mathrsfs}
\usepackage{tikz,xcolor,hyperref}
\numberwithin{equation}{section}
\usepackage{mathrsfs}
\usepackage{extarrows}
\definecolor{lime}{HTML}{A6CE39}
\DeclareRobustCommand{\orcidicon}{
	\begin{tikzpicture}
	\draw[lime, fill=lime] (0,0)
	circle [radius=0.16]
	node[white] {{\fontfamily{qag}\selectfont \tiny ID}};
	\draw[white, fill=white] (-0.0625,0.095)
	circle [radius=0.007];
	\end{tikzpicture}
	\hspace{-2mm}
}
\foreach \x in {A, ..., Z}{\expandafter\xdef\csname orcid\x\endcsname{\noexpand\href{https://orcid.org/\csname orcidauthor\x\endcsname}
			{\noexpand\orcidicon}}
}

\def\Z{{\mathbb Z}}

\def\N{{\mathbb N}}

\def\pmod #1{\ ({\rm{mod}}\ #1)}

\def\Ack{\medskip\noindent {\bf Acknowledgments}}

\theoremstyle{plain}
\newtheorem{theorem}{Theorem}
\newtheorem{lemma}{Lemma}
\newtheorem{corollary}{Corollary}

\newtheorem{conjecture}{Conjecture}
\theoremstyle{definition}

\usepackage{etoolbox}
\makeatletter
\patchcmd{\@settitle}{\uppercasenonmath\@title}{}{}{}
\patchcmd{\@setauthors}{\MakeUppercase}{}{}{}
\patchcmd{\section}{\scshape}{}{}{}
\makeatother

\begin{document}

\title
[{Representations of positive integers by three almost-prime squares}]
{Representations of positive integers by three almost-prime squares}

\author
[Yue-Feng She, Yu-Chen Sun, Guang-Liang Zhou]
{Yue-Feng She, Yu-Chen Sun, Guang-Liang Zhou}

\address {(Yue-Feng She) Department of Applied Mathematics, Nanjing Forestry University, Nanjing 210037, People's Republic of China}
\email{\tt she.math@njfu.edu.cn}

\address {(Yu-Chen Sun) School of Mathematics, University of Bristol, Bristol, BS8 1UG, England, United Kingdom}
\email{\tt yuchensun93@163.com}

\address{(Guang-Liang Zhou) Department of Applied Mathematics, Nanjing Forestry University, Nanjing 210037, People's Republic of China}
\email{guangliangzhou@126.com}

\keywords{Sum of three squares; almost primes; sieve methods.}
\subjclass[2020]{Primary 11E25; Secondary 11N35, 11N36.}
%\thanks{*Corresponding author}

\begin{abstract}
Let $P_r$ denote an integer with at most $r$ prime factors, counted with multiplicity. It is known that
every sufficiently large integer $N$ satisfying 
$N \equiv 3 \pmod{24}$ and $5 \nmid N$,
can be written in the form $N= x_1^2+x_2^2+x_3^2$ where $x_1,x_2,x_3$ are integers.
In this paper, we prove that the above representation in the following two different forms
(i) $x_1x_2x_3$ is a $P_{67}$-number; (ii) each $x_i$ is a $P_{27}$-number.

This result improves on the previous result of Waibel\cite{Wa}, in which $P_{72}$ was obtained in place of $P_{67}$. The proofs combine the higher-dimensional sieve, a Richert-type weighted sieve method introduced by Cai \cite{Cai} with a Bombieri-Vinogradov type result given by Waibel\cite{Wa}. 
Applying the same method in a one dimensional sieve setting, we also show that 
every sufficiently large $N$ not of the form $4^k(8l+7)$ can be written in the form
\[
N = x^{2} + y^{2} + (2^{a} z)^{2},
\]
where $x,y,a,z$ are non-negative integers and $z$ is a $P_{18}$-number. This improves upon a result of  Banerjee \cite{Ban} who obtained $P_{118}$ in place of $P_{18}$.

\end{abstract}
\maketitle

\section{Introduction}
\setcounter{lemma}{0}
\setcounter{theorem}{0}
\setcounter{corollary}{0}
\setcounter{remark}{0}
\setcounter{equation}{0}

 The representation of integers as sums of three squares has a long history. Legendre (1797) and Gauss (1796--1801) independently showed that an integer $m$ can be expressed as a sum of three non-negative squares if and only if $m$ is not of the form $4^k(8\ell+7)\ (k,\ell\in\mathbb{N}=\{0,1,2,\dots\})$. Moreover, Gauss gave an explicit formula for the representation number 
 \[
 r_3(m)=\#\{(x,y,z) \in \N^3: m=x^2+y^2+z^2\}
 \]
in terms of Hurwitz class numbers. 
By Siegel's lower bound for the class numbers of imaginary quadratic fields \cite{Si}, it follows that for any $\epsilon > 0,$
$$r_3(m)\gg m^{1/2 - \epsilon}.$$

A well-known conjecture related to the three-squares theorem is that every positive integer $n$ satisfying
\begin{align}\label{e35}
n \equiv 3 \pmod{24} \quad \text{and} \quad 5 \nmid n
\end{align}
can be represented as a sum of three squares of primes. Towards this conjecture, Blomer and Br\"udern \cite{BB} established that every sufficiently large integer $n$ satisfying (1.1) can be written as the sum of three squares of $P_r,$ where
\[
r = \begin{cases}
371, &\text{if}\ n \text{ is square-free}, \\
521, & \text{otherwise}.
\end{cases}
\]
This result has been improved by various authors \cite{Blo, Lv, Cai, Wa}.

In the present paper, we prove the following two theorems.
\begin{theorem}\label{thm1.2}
Let \(n\) be a sufficiently large integer satisfying condition \((\ref{e35})\).
Then $n$ can be represented as
\[
n=x_1^2+x_2^2+x_3^2,
\]
where $x_1x_2x_3$ is a $P_{67}$-number. In fact, the number of such representations is $\gg n^{1/2-\varepsilon}$ for any $\varepsilon>0$.
\end{theorem}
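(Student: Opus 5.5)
We run a three-dimensional weighted sieve on the sequence
\[
\mathcal{A}=\{x_1x_2x_3:\ x_1^2+x_2^2+x_3^2=n,\ x_i>0\},
\]
weighting each representation by a smooth function supported near $|x_i|\asymp\sqrt{n}$.

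\textbf{Step 1: local densities and level of distribution.} For squarefree $d=d_1d_2d_3$ with $(d,30)=1$, set
\[
A_{\mathbf d}=\#\{\mathbf x:\ x_1^2+x_2^2+x_3^2=n,\ d_i\mid x_i\}.
\]
The idea is to write $A_{\mathbf d}=\omega(\mathbf d)X/d+r_{\mathbf d}$. Here $X\asymp \mathfrak S(n)\,n^{1/2}$ is the main term. It satisfies $\gg n^{1/2-\varepsilon}$ by Siegel's bound, and the congruence conditions (\ref{e35}) guarantee that the singular series is nondegenerate at $2,3,5$. The multiplicative density $\omega$ is determined by counting solutions of $x_1^2+x_2^2+x_3^2\equiv n$ modulo $p$ with prescribed coordinates $\equiv 0$. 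The key input is Waibel's Bombieri--Vinogradov type theorem, which gives
\[
\sum_{d\le n^{\theta}}\max|r_{\mathbf d}|\ll X(\log n)^{-A}.
\]
We assume its exponent of distribution is $\theta$; I expect this means level $D=n^{\theta}$ for the product $d_1d_2d_3$, with $\theta$ somewhat below $1/2$, coming from Duke--Iwaniec-type estimates on Fourier coefficients of half-integral weight forms. The sieve dimension is $\kappa=3$, since each of the three coordinates contributes roughly one excluded residue class per prime.

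\textbf{Step 2: weighted sieve.} We apply the Richert-type weighted sieve in Cai's form, with parameters $z=D^{1/\alpha}$ and $y=D^{1/\beta}$. The weighted sum is
\[
W=\sum_{\mathbf x}\Bigl(1-\sum_{z\le p<y,\ p\mid x_1x_2x_3}\lambda\bigl(1-\tfrac{\log p}{\log y}\bigr)\Bigr)\mathbf 1_{(x_1x_2x_3,P(z))=1}.
\]
We bound it from below by the lower-bound sieve function $f_3$ of the Diamond--Halberstam--Richert three-dimensional sieve, minus upper bounds of $F_3$ type for the subtracted terms. These upper bounds require distribution of $A_{p\mathbf d}$, which again follows from Waibel's result. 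If $W>0$, any surviving $x_1x_2x_3$ has no prime factor below $z$ and carries only a limited weighted count of prime factors between $z$ and $y$. The standard counting argument then bounds $\Omega(x_1x_2x_3)$ by a quantity depending on $\log(n^{3/2})/\log D$, $\alpha$, $\beta$ and $\lambda$. The contribution of $x$ with repeated prime factors $p^2\mid x_i$, $p\ge z$, is shown to be negligible by trivial bounds.

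\textbf{Step 3: numerical optimization (main obstacle).} The difficulty is to choose $\alpha,\beta,\lambda$ so that the lower bound is positive while the resulting $r$ equals $67$. This requires explicit numerics for the three-dimensional DHR functions $f_3,F_3$ together with the integrals arising from Cai's weights. The improvement over $P_{72}$ must come entirely from Cai's sharper weight function. I would set up the inequality
\[
f_3(\alpha)-\lambda\int_{1/\beta}^{1/\alpha}F_3(\cdots)\Bigl(1-\tfrac{1}{\beta t}\Bigr)\frac{dt}{t}>0,
\]
or its Cai variant, and then search numerically, verifying that $r<68$. The count $\gg n^{1/2-\varepsilon}$ follows since $W\gg X/(\log n)^3$.
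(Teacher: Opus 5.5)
Your outline has the same architecture as the paper's proof: Richert weights $1-\log p/\log y$ on the product $x_1x_2x_3$, the three-dimensional functions $f_3,F_3$, Waibel's level of distribution, and removal of repeated primes. But it does not prove the statement, because the statement \emph{is} the numerical output $r=67$, and every quantitative ingredient is either left open or wrong.

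The level is not ``somewhat below $1/2$''. Waibel's estimate (Lemma~\ref{pro1}) controls $\sum_{t\le D}\mu^2(t)4^{\nu(t)}|\mathcal A(t)-W(t)X/t|$ only for $D<n^{3/116}$. It is this small exponent that forces $r$ up to $67$: the term $\log(n^{3/2})/\log y\approx 59.2$ dominates. With your guessed exponent the final count would come out much smaller, without justification.

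The paper takes $D=n^{3/116-\varepsilon}$, $z=D^{5/133}$, $y=D^{529/540}$ and $\lambda=0.12712$ (its $\theta_1$). It uses $f_3(26.6)\ge 0.99999$ and $\int_{5/133}^{529/540}(1-\frac{540}{529}t)F_3(26.6(1-t))\,\frac{dt}{t}\le 2.62214$, and checks $0.99999-3\cdot 0.12712\cdot 2.62214>0$. It then gets $\Omega(x_1x_2x_3)<\frac{1}{\lambda}+\frac{3/2}{(529/540)(3/116)}<67.08$. Your Step 3 stops before any of this.

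The one inequality you do write is also wrong. With $p=D^t$, the range $z\le p<y$ is $1/\alpha\le t<1/\beta$, so your limits are reversed. The weight is $1-\beta t$, not $1-1/(\beta t)$, which is negative on the whole range. The argument of $F_3$ is $\alpha(1-t)$. Most importantly, the integral must be multiplied by the dimension $\kappa=3$, because $W(p)$ is $3$ on average (cf.\ (\ref{e:V1})). Dropping this factor would let you take $\lambda$ about three times larger and produce a spurious smaller $r$.

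There are also smaller gaps. Restricting to $(d,30)=1$ lets survivors carry arbitrarily many factors $3$ and $5$, so your claim that surviving products have no prime factor below $z$ fails. You should sift all odd primes, which is permitted since $0\le W(p)<p$ (this is where $5\nmid n$ enters); $2\nmid x_i$ is automatic for $n\equiv 3\pmod 8$.

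Since the weight counts each prime only once, you must discard every $\mathbf x$ with $p^2\mid x_1x_2x_3$ for some $z\le p<y$. This includes $p\mid x_i$, $p\mid x_j$ with $i\ne j$, not just $p^2\mid x_i$. Nor is this a trivial bound: counting pairs with $p\mid x_1$, $p\mid x_2$ trivially gives about $n/p^2$ per prime, hence about $n/z$ in total. You need $r_2(k)\ll k^{\varepsilon}$ together with the count $\ll\sqrt n/p^2+1$ of $c\le\sqrt n$ with $c^2\equiv n\pmod{p^2}$ for $p\nmid n$, as in Lemma~\ref{lem3}.

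Finally, drop the smooth weight. The main term comes from Siegel's genus formula and the error term from Waibel's theta-series bounds, and both concern the sharp count $r(f,n)$. The crude bound $x_1x_2x_3\le n^{3/2}$ is all the $\Omega$-count needs.
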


\begin{theorem}\label{thm1.3}
Let \(n\) be a sufficiently large integer satisfying condition \((\ref{e35})\).
Then $n$ can be represented as
\[
n=x_1^2+x_2^2+x_3^2,
\]
where each $x_i$ is a $P_{27}$-number for $1\le i\le 3$.  More precisely, the number of such representations is $\gg n^{1/2-\varepsilon}$ for any $\varepsilon>0$.
\end{theorem}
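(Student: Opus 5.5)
We prove Theorem \ref{thm1.3} by applying a three-dimensional weighted sieve to the set of representations $\mathcal{A}=\{(x_1,x_2,x_3)\in\N^3: x_1^2+x_2^2+x_3^2=n\}$. We sieve each coordinate separately, so the sifting condition is $(x_i,P(z))=1$ for $i=1,2,3$.

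\textbf{Local densities and the level of distribution.} Condition (\ref{e35}) is exactly what is needed to rule out local obstructions at $2$, $3$ and $5$. We take $P(z)=\prod_{7\le p<z}p$. For squarefree $d_1,d_2,d_3$ coprime to $30$, we write
\[
\#\{x\in\mathcal{A}: d_i\mid x_i\ (i=1,2,3)\}=g(d_1,d_2,d_3)\,r_3^*(n)+R(d_1,d_2,d_3).
\]
Here $g$ is multiplicative and is computed from the local densities of the ternary form restricted to $p\mid x_i$. For $p\nmid n$ it satisfies $g(p,1,1)=1/p+O(p^{-2})$, so each coordinate contributes dimension $1$ and the total sieve dimension is $\kappa=3$. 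We then invoke Waibel's Bombieri--Vinogradov type estimate: for any $\varepsilon>0$,
\[
\sum_{d_1d_2d_3\le D}|R(d_1,d_2,d_3)|\ll n^{1/2-\delta}, \qquad D=n^{\theta-\varepsilon}.
\]
We expect Waibel's result to give a value of $\theta$ slightly less than $1/2$ for products of moduli. This estimate is proved by expressing the count through theta series and using Duke-type bounds on Fourier coefficients of half-integral weight forms. Siegel's bound gives $r_3^*(n)\gg n^{1/2-\varepsilon}$ for the main term.

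\textbf{The weighted sieve.} We use Cai's Richert-type weights in the three-dimensional setting:
\[
W=\sum_{\substack{x\in\mathcal{A}\\ (x_1x_2x_3,P(z))=1}}\Bigl(1-\sum_{i=1}^3\sum_{\substack{z\le p<y\\ p\mid x_i}}\lambda\Bigl(1-\frac{\log p}{\log y}\Bigr)\Bigr),
\]
with $z=D^{1/u}$ and $y=D^{1/v}$. We bound the main term from below by the lower-bound sieve function $f_3$ of the Diamond--Halberstam--Richert $\kappa=3$ sieve. We bound each weighted subtraction term from above by $F_3$, applied to the sequence in which one coordinate is conditioned on $p\mid x_i$; this requires Waibel's level for the modified moduli $d_i p$.

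Suppose $W>0$. Since each $x_i\le n^{1/2}$ has no prime factor below $z$, a standard computation bounds $\Omega(x_i)$. Each $x_i$ is $P_r$ provided
\[
r>\frac{\log n^{1/2}}{\log y}-1+\frac{1}{\lambda}\Bigl(\ldots\Bigr),
\]
in Richert's usual form with one degree of freedom per coordinate. We then optimise $u$, $v$ and $\lambda$ numerically so that the resulting $r$ equals $27$. This yields $W\gg r_3^*(n)(\log n)^{-3}\gg n^{1/2-\varepsilon}$.

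For Theorem \ref{thm1.2} the same framework applies with a single combined weight on $x_1x_2x_3$, whose size is $n^{3/2}$. The numerical optimisation then gives $67$.

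\textbf{Main obstacle.} The hardest step is the numerical verification that the lower bound is positive. This requires explicit values of $f_3$ and $F_3$, which are defined through delay-differential equations. It also requires controlling the integral $\int_{z}^{y}F_3(\cdot)(1-\log p/\log y)\,dp/(p\log p)$ with enough precision to reach exactly $27$ and $67$.

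A secondary difficulty arises at primes $p\mid n$, where $g$ deviates from $1/p$. This deviation must be absorbed into the singular series without losing the dimension bound.
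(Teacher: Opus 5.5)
\textbf{There is a genuine gap.} Your architecture matches the paper's: a three-dimensional sieve with Cai's Richert-type weights, $f_3$ for the main term, $F_3$ at level $D/p$ for the subtracted terms, and Siegel's bound. Summing the weights over the three coordinates is harmless, since positivity then forces $g(x_i)<1/\lambda$ for each $i$, exactly as the paper's observation $g_1(x_i)\le g_1(x_1x_2x_3)$ does. The problem is that the one quantitative input you state is wrong, and the verification that actually produces $27$ is missing.

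Waibel's estimate (Lemma~\ref{pro1}) gives a level of distribution of only $D<n^{3/116}$, nowhere near $n^{1/2}$. The constant $27$ is dictated by this exponent. The paper takes $D_1=n^{3/116-\varepsilon}$, $z_1=D_1^{5/133}$, $y_1=D_1^{529/540}$ and $\theta_1=0.12712$. It then needs $f_3(26.6)-\varepsilon\ge 0.99999$ and $\int_{5/133}^{529/540}(1-\tfrac{540}{529}t)F_3(26.6(1-t))\,dt/t\le 2.62214$ to obtain $H\ge 0.000010689\,XV_1(z_1)>0$, which is a very thin margin. From this it deduces $\Omega(x_i)<\log x_i/\log y_1+1/\theta_1\le 1/\theta_1+\tfrac{270}{529}(\tfrac{3}{116}-\varepsilon)^{-1}<27.602$. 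The second term, about $19.7$, comes entirely from the level. With your assumed level the same computation would give a much smaller $r$, which already signals that the input is not available. Moreover, ``optimise $u,v,\lambda$ so that $r$ equals $27$'', with a placeholder in the Richert inequality, is not an argument: positivity of the weighted sum at explicit parameters is the heart of the proof.

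Two further steps are needed before the $\Omega$-bound is valid.
\begin{itemize}
\item Since the weight counts each prime in $[z,y)$ only once, you must discard triples with $p^2\mid x_1x_2x_3$ for some $z\le p<y$. The paper does this with Lemma~\ref{lem3}, whose $O(n^{1/2-\delta})$ bound is negligible against the main term $\gg n^{1/2-\varepsilon}$.
\item Your sieving range $7\le p<z$ is wrong. Condition (\ref{e35}) does not prevent $3\mid x_i$ (when $9\mid n$, e.g.\ $27=3^2+3^2+3^2$) or $5\mid x_i$ (e.g.\ $51=5^2+5^2+1^2$), so unsifted powers of $3$ and $5$ would leave $\Omega(x_i)$ uncontrolled. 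The role of (\ref{e35}) is rather to guarantee $W(3)<3$ and $W(5)<5$, so that these primes \emph{can} be sifted. The paper sifts all odd primes below $z_1$, while $n\equiv 3\pmod 8$ forces every $x_i$ to be odd.
\end{itemize}
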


Theorem \ref{thm1.2} improves on a result of Waibel \cite{Wa}, where $P_{72}$ was obtained in place of $P_{67}$. Theorem \ref{thm1.3} is obtained from the same weighted-sieve argument, but the final almost-prime estimate is applied to each coordinate separately. The proof combines the three-dimensional sieve, a Richert-type weighted sieve method introduced by Cai \cite{Cai} with the level-of-distribution estimate arising from Waibel's work on ternary quadratic forms.

We next discuss a related application of the same method to the conjectures of Sun. Sun proposed the following two conjectures in \cite{Sun0} and \cite{Sun2} respectively.

\begin{conjecture}
Let $n$ be a positive integer. If $ n\equiv10\pmod{24} $, then $n$ can be expressed as
\[
n = x^{2} + y^{2} + (2^{a}3^{b})^{2},
\]
where $x,y,a,b$ are non-negative integers and $b>0$. If $ n\equiv5\pmod{12} $, then $n$ can be expressed as
\[
n = x^{2} + y^{2} + (2^{a}5^{b})^{2},
\]
where $x,y,a,b$ are non-negative integers and $a>0$.
\end{conjecture}

\begin{conjecture}
Every integer $n \geq 2$ can be expressed as
\[
n = x^{2} + y^{2} + (2^{a}3^{b})^{2} + (2^{c}5^{d})^{2},
\]
where \(x,y,a,b,c,d\) are non-negative integers. 
\end{conjecture}

Banerjee \cite{Ban} showed that every sufficiently large integer $m$ not of the form $4^k(8\ell + 7)$ for any $k,\ell\in\mathbb{N}$ can be represented as $ m = x^2 + y^2 + (2^a z)^2,$ such that $z$ is a $P_{118}$ and $x,y,a\in\mathbb{N}$. Using Waibel's diagonal norm estimate \cite[Theorem 2]{Wa} together with a one-dimensional weighted sieve, we improve this related result as follows.

\begin{theorem}\label{thm1.1}
Every sufficiently large integer $m$ not of the form $4^k(8\ell + 7)$ for any $k,\ell\in\mathbb{N}$ can be represented in the form
\[
m = x^2 + y^2 + (2^a z)^2,
\]
where $x,y,a,z$ are non-negative integers and $z$ is a $P_{18}$-number. Moreover, the number of such representations is $\gg m^{1/2-\varepsilon}$ for any $\varepsilon>0$.
\end{theorem}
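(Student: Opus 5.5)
We reduce Theorem \ref{thm1.1} to a one-dimensional weighted sieve problem on the third coordinate. First we remove powers of $4$: if $m=4^k m'$ with $4\nmid m'$, then every representation $m'=x^2+y^2+(2^a z)^2$ lifts to one of $m$ by multiplying $x,y,2^az$ by $2^k$. We may therefore assume $4\nmid m$, i.e. $m\equiv 1,2,3,5,6 \pmod 8$. The power $2^a$ is then fixed to make the local conditions at $2$ solvable with $z$ odd: we take $a=0$ when $m\equiv 1,2,3,5,6\pmod 8$ admits $x^2+y^2+z^2$ with $z$ odd (true in all these classes), and otherwise $a=1$. This leaves the task of counting
\[
\mathcal{A}=\{\,z\le \sqrt{m}/2^a,\ z \text{ odd}: \ m-(2^az)^2=x^2+y^2\ \text{for some}\ x,y\,\},
\]
weighted by $w(z)=r_2(m-4^az^2)$ (or rather the smooth weight $\sum_{x^2+y^2+4^az^2=m}$). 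We sieve $z$ by primes $p>2$, $p\nmid m$. The total mass is $\sum_z w(z)\asymp r_3$-type quantity $\gg m^{1/2-\varepsilon}$ by Siegel's bound.

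The key input is a level of distribution. For squarefree odd $d$ we need
\[
A_d=\sum_{\substack{x^2+y^2+4^az^2=m\\ d\mid z}}1 = \frac{\rho(d)}{d}X + R_d,
\]
with $\rho$ multiplicative. We will compute $\rho(p)$ by local densities of the ternary form $x^2+y^2+4^ap^2z'^2$ (giving $\rho(p)\approx 1$ on average, dimension $\kappa=1$, with local factors depending on $\left(\frac{-m}{p}\right)$ and $p\mid m$). For the remainders we invoke Waibel's diagonal norm estimate \cite[Theorem 2]{Wa}, which is a Bombieri--Vinogradov type bound for representation numbers of the forms $x^2+y^2+d^2z^2$ averaged over $d$. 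We expect this to yield $\sum_{d\le D}|R_d|\ll X m^{-\delta}$ with $D=m^{\theta}$ for an explicit $\theta$ (in terms of $z\le m^{1/2}$, level $D=z^{\tau}$ with some explicit $\tau$). Extracting the correct main term and the precise admissible exponent from \cite{Wa} is the step we expect to be the main obstacle, since the final $r=18$ depends sensitively on $\tau$.

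With $(\kappa=1,\tau)$ in hand, we apply Richert's weighted sieve in the form used by Cai \cite{Cai}: with weights $W(z)=1-\lambda\sum_{z^{1/v}\le p<z^{1/u},\,p\mid z}\bigl(1-\frac{u\log p}{\log z}\bigr)$ and parameters $u,v$, the lower bound linear sieve function $f$ and upper bound $F$ give $\sum_{z\in\mathcal A}w(z)W(z)\gg X/\log m$ provided $r>\tau^{-1}u-1+\frac{\lambda}{?}$-type numerical condition holds. We then discard the contribution of $z$ divisible by $p^2$ for $p\ge z^{1/v}$ (negligible via the same upper bounds), so any $z$ with positive weight has at most $r$ prime factors counted with multiplicity. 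Optimizing $u,v$ numerically with the $\tau$ from the previous step should give $r=18$, and combined with $X\gg m^{1/2-\varepsilon}$ gives $\gg m^{1/2-\varepsilon}$ representations, proving Theorem \ref{thm1.1}.
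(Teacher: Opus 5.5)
\textbf{There is a genuine gap.} Your architecture matches the paper's: a linear Richert-type weighted sieve on the third coordinate, remainders controlled via \cite[Theorem 2]{Wa}, square factors discarded, and Siegel's bound for the count. But you leave open exactly the step on which the number $18$ rests, so as written the argument gives at best $P_r$ for some unspecified $r$. You determine neither the level of distribution nor the sieve parameters, and your positivity condition still contains a placeholder. In the paper this step is Lemma~\ref{lem1}:
\begin{itemize}
\item For odd squarefree $d$, the genus of $x^2+y^2+d^2z^2$ has only one spinor genus \cite[Theorem 102:10]{OM}. Hence $r(\mathrm{spn}\,f_d,m)=r(\mathrm{gen}\,f_d,m)$ is the correct main term.
\item Waibel's Theorem 2, with level $N_d=4d^2$ and $F_W(Q_d,3/2)\asymp d^2$, gives $\|\mathcal F_d\|\ll d^{1/2+\varepsilon}$.
\item Feeding this into Waibel's coefficient bound gives the pointwise estimate $\mathbf R(d,m)\ll(d^{3/14}m^{13/28}+d^{3/8}m^{7/16}+d^2m^{1/4})m^{\varepsilon}$.
\item Summing over $d\le m^{\theta}$ is a trivial sum of pointwise bounds, not a Bombieri--Vinogradov-type average. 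It forces $13/28+17\theta/14<1/2$, i.e.\ $\theta<1/34$.
\end{itemize}
Only with this level do the choices $D=m^{1/34-\varepsilon}$, $z_2=D^{10/51}$, $y_2=D^{54/55}$ and $\theta_2=0.8954$ work. Together with $f_1(5.1)\ge 0.99865$ and $\int_{10/51}^{54/55}(1-\frac{55}{54}t)F_1(5.1(1-t))\frac{dt}{t}\le 1.11531$, they give a positive weighted sum and $\Omega(z)<\theta_2^{-1}+\frac{55}{108}\cdot 34<18.5$.

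\textbf{Structural problems that would break the final count even with the right level.}
\begin{itemize}
\item \emph{Sifting only by $p\nmid m$.} This leaves small primes dividing $m$ free to divide $z$ with any multiplicity, while the Richert weight only sees primes in $[z_2,y_2)$. The inequality $\Omega(z)\le \log z/\log y_2+\theta_2^{-1}$ needs $z$ to have no prime factor below $z_2$. So you must sift by all odd primes, as the paper does.
\item \emph{The prime $2$.} Your handling is inconsistent. You fix $a$ (the branch $a=1$ is vacuous by your own remark) and require $z$ odd in $\mathcal A$, but your $A_d$ carries no parity condition. Imposing parity would require distribution results for $x^2+y^2+4d^2z^2$ as well; dropping it leaves $\nu_2(z)$ unsifted. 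The paper avoids this entirely: it does not fix $a$, sieves $x_3$ by odd primes only, and afterwards writes $x_3=2^az$ with $z$ odd. That is exactly why $2^a$ appears in the statement.
\item \emph{Removing $p^2\mid z$.} This cannot come entirely from the same sieve upper bounds, since $p^2>D$ once $p>D^{1/2}$, while $y_2=D^{54/55}$. It needs the elementary count of Lemma~\ref{lem3}, which gives $\ll m^{\varepsilon}(m^{1/2}/z_2+m^{1/4})$.
\item \emph{Reduction to $4\nmid m$.} This is legitimate only when $m'=4^{-k}m$ is itself large, and it yields $\gg m'^{1/2-\varepsilon}$ rather than $\gg m^{1/2-\varepsilon}$ representations. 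The loss is genuine, because $r_3(4m)=r_3(m)$; for $m=4^k$ the count is bounded. The paper's direct use of Siegel's bound for $r_3(m)$ tacitly needs the same restriction.
\end{itemize}
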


\begin{corollary}\label{cor1.2}
Every sufficiently large integer $m$ can be represented in the form
\[
m = x^2 + y^2 + 2^{2a} + (2^b z)^2,
\]
where $x, y, a, b$ are non-negative integers and $z$ is a $P_{18}$-number.
\end{corollary}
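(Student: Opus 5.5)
The plan is to derive Corollary~\ref{cor1.2} from Theorem~\ref{thm1.1} by choosing the term $2^{2a}$ so that $m-2^{2a}$ avoids the exceptional set $\{4^k(8\ell+7)\}$ and is itself large. Then Theorem~\ref{thm1.1} applied to $m' = m-2^{2a}$ writes $m' = x^2+y^2+(2^b z)^2$ with $z$ a $P_{18}$-number, which is exactly the required form.

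The choice of $a$ goes by cases on $m$.

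\textbf{Case 1: $m$ is not of the form $4^k(8\ell+7)$.} It suffices to find some $a$ with $m-4^a$ admissible. The simplest uniform choice is a single $a$ with $4^a$ of moderate size, say $a$ fixed at $0$ or $1$, making $m - 4^a$ large. Try $a=0$, so $m-1$ is the candidate. If $m-1 = 4^k(8\ell+7)$, try $a=1$, so $m-4$ is the candidate. If both are exceptional, then $m \equiv 1 \pmod 4$ and $m\equiv 4 \pmod 8$ simultaneously, which is impossible; the sub-cases are:
\begin{itemize}
\item if $m-1$ is odd exceptional, then $m\equiv 0\pmod 8$ and $m-4\equiv 4\pmod 8$, so $m-4 = 4\cdot(\text{odd})$ with odd part $\equiv 1,3\pmod 4$, hence not $\equiv 7\pmod 8$ when the odd part is $\equiv 1,3 \pmod 8$;
\item if $m-1$ is even exceptional, then $4\mid m-1$ and $m$ is odd, so $m-4$ is odd with $m-4\equiv m-1-3$.
\end{itemize}

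\textbf{Alternative organisation by residues.} Rather than chase sub-cases, write $m=4^k m_0$ with $4\nmid m_0$. If $m_0\not\equiv 0 \pmod 4$, choose $a$ to hit the right residue:
\begin{itemize}
\item for $m$ odd, $m-1$ or $m-4$ has odd part in a good class, checked modulo $32$;
\item for $m$ divisible by a large power of $4$, take $4^a$ with $a=k$ (or $k-1$), so that $m-4^a = 4^{k}(m_0-1)$ or $4^{k-1}(4m_0-1)$, and $4m_0-1\equiv 3\pmod 4$ is bad only if it is $\equiv 7 \pmod 8$; in that case use $a=k$ instead.
\end{itemize}
This gives a finite residue check modulo $8$ on $m_0$. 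The candidate $a$ always satisfies $4^a\le m/2$ except in the degenerate case $m_0$ small. That case is handled by choosing $a$ smaller, for instance $a=k-1$, which gives $m-4^a=4^{k-1}(4m_0-1)$ with $4m_0-1\in\{3,7,11\}$, and one picks the admissible option.

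\textbf{Main obstacle.} The only real point is guaranteeing, uniformly in $m$, some $a$ with $m-4^a$ admissible and $m-4^a\ge m/4$, so that "sufficiently large" transfers from $m'$ to $m$. This is a finite verification over residues of $m/4^{k}$ modulo $8$ together with $a\in\{k-1,k\}$ or $a\in\{0,1\}$. I expect it to go through by exhaustion, and Theorem~\ref{thm1.1} then finishes the proof.
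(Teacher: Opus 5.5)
\textbf{Gap: the fixed choice $a\in\{0,1\}$ in Case~1 fails.} Your overall strategy is the paper's: subtract a power of $4$ so that the remainder is not of the form $4^u(8v+7)$ and is still $\gg m$, then apply Theorem~\ref{thm1.1}. But Case~1 contains a false step. You claim that $m-1$ and $m-4$ cannot both be exceptional, and this fails for every $m\equiv 0\pmod{32}$. For such $m$, $m-1\equiv 7\pmod 8$, and $m-4=4(m/4-1)$ with $m/4-1\equiv 7\pmod 8$. For instance $m=32=4^2\cdot 2$ is not exceptional, yet $31$ and $28=4\cdot 7$ both are. Your first sub-bullet only treats an odd part $\equiv 1,3\pmod 8$ and silently drops the odd part $\equiv 7\pmod 8$, which is exactly this case. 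So no choice of $a$ from a fixed finite set can work; the exponent has to track the $4$-adic valuation of $m$. Case~1 also never treats exceptional $m$, which the corollary includes.

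\textbf{How to complete the argument.} Your ``alternative organisation'' is the right repair, but you leave it as an unperformed ``finite verification''. The degenerate-case paragraph is also muddled: $m_0=2$ gives $4m_0-1=7$, which is exceptional. The check is short. Write $m=4^k m_0$ with $4\nmid m_0$.

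If $k\ge 1$ and $m_0$ is odd, take $a=k-1$. Then $m-4^{k-1}=4^{k-1}(4m_0-1)$ with $4m_0-1\equiv 3\pmod 8$, and this is at least $3m/4$.

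If $m_0\equiv 2\pmod 4$, take $a=k$. Then $m-4^k=4^k(m_0-1)$ with $m_0-1\equiv 1\pmod 4$, and this is at least $m/2$.

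If $k=0$ and $m$ is odd, take $a=1$ when $m\equiv 1,5,7\pmod 8$, giving $m-4\equiv 5,1,3\pmod 8$. Take $a=0$ when $m\equiv 3\pmod 8$, giving $m-1\equiv 2\pmod 8$.

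With this the proof is complete, and it covers $m=4^k$ (via $3\cdot 4^{k-1}$) without a separate case. The paper instead subtracts $4^k$ or $4^{k+1}$, requiring $m_0-1$ or $m_0-4\not\equiv 0,4,7\pmod 8$. It then handles $m=4^k$ separately by writing it as a sum of four copies of $4^{k-1}$.
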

\begin{proof}
Write $m=4^k m'$ with $4\nmid m'$. If $m'=1$, then $k>1$ for all sufficiently large $m$, and hence
\[
m=(2^{k-1})^2+(2^{k-1})^2+(2^{k-1})^2+(2^{k-1})^2,
\]
which has the required form. If $m'>1$, then one can choose $a\in\{0,1\}$ such that
\[
m'-4^a\not\equiv0,4,7\pmod 8
\qquad\text{and}\qquad
m'-4^a>0.
\]
For sufficiently large $m$, the integer $4^k(m'-4^a)$ is sufficiently large and is not of the form $4^u(8v+7)$. The result follows by applying Theorem \ref{thm1.1} to $4^k(m'-4^a)$.
\end{proof}

The paper is organized as follows. In Section 2 we collect the required estimates for theta series, the level of distribution supplied by Waibel's work, and the sieve lemmas used later. In Section 3 we apply a Richert-type weighted sieve in dimensions three and one. The three-dimensional argument proves Theorems \ref{thm1.2} and \ref{thm1.3}, while the one-dimensional argument proves Theorem \ref{thm1.1}.

\Ack\ The first author was supported by the Tianyuan Mathematical Foundation (Grant No. 12526613). The third author was supported by the National Natural Science Foundation of China (Grant No. 12401009). The second author would like to thank the number theory group at Nanjing Forestry University, especially Prof. Chen Wang, for their hospitality. Part of this work was completed while the second author was visiting Nanjing Forestry University.

\section{Notation and auxiliary lemmas}
\setcounter{lemma}{0}
\setcounter{theorem}{0}
\setcounter{corollary}{0}
\setcounter{remark}{0}
\setcounter{equation}{0}
In this paper, $m$ and $n$ represent sufficiently large positive integers, and $p$ denotes a prime number. $p^r \parallel n$ means that $p^r \mid n$ but $p^{r+1} \nmid n$. By convention, $\mu(n)$ is the M\"obius function, $\nu(n)$ is the number of distinct prime factors of $n$,
and $\Omega(n)$ denotes the total number of prime factors of $n$. 
Bold lowercase letters denote three-dimensional horizontal vectors, and their components are represented by the same letter with subscripts. For example, $\mathbf{y} = (y_1, y_2, y_3)$.

To successfully apply Waibel's mean-value result \cite[Lemma 17]{Wa}, we need to introduce the basic notations of quadratic forms and multiplicative structures.
Let
\[
Q=\begin{pmatrix}
2\ell_1^2&0&0\\
0&2\ell_2^2&0\\
0&0&2\ell_3^2
\end{pmatrix},
\]
where $\ell_i$ $(1\leq i\leq3)$ are positive integers. Define the associated ternary quadratic form
\[
f(\mathbf x)=\frac12\mathbf xQ\mathbf x^T.
\]
For an integer $n$, put
\[
r(f,n)=\#\{\mathbf x\in\mathbb Z^3:f(\mathbf x)=n\},
\]
and let
\[
\mathfrak o(f)=\#\{A\in SL_3(\mathbb Z):A^TQA=Q\}
\]
be the number of automorphs of $f$. We define the usual weighted means
\[
r(\mathrm{gen}\,f,n)=
\left(\sum_{\widetilde f\in\mathrm{gen}\,f}\frac1{\mathfrak o(\widetilde f)}\right)^{-1}
\sum_{\widetilde f\in\mathrm{gen}\,f}\frac{r(\widetilde f,n)}{\mathfrak o(\widetilde f)}
\]
and
\[
r(\mathrm{spn}\,f,n)=
\left(\sum_{\widetilde f\in\mathrm{spn}\,f}\frac1{\mathfrak o(\widetilde f)}\right)^{-1}
\sum_{\widetilde f\in\mathrm{spn}\,f}\frac{r(\widetilde f,n)}{\mathfrak o(\widetilde f)},
\]
where the summations are over representatives of the classes in the genus and spinor genus of $f$, respectively; see \cite[Section 102]{OM}.
For $n$ satisfying \eqref{e35}, Siegel's formula and the local-density computation used in \cite{Lv} give
\begin{equation}\label{e66}
r(\mathrm{gen}\,f,n)=\frac{\pi}{4}\frac{\lambda(\mathbf l,n)}{\ell_1\ell_2\ell_3}\mathfrak S(n)n^{1/2},
\end{equation}
where $\mathfrak S(n)\gg n^{-\varepsilon}$ and $\lambda(\mathbf l,n)$ is the same local factor as in \cite[(3.10)]{Lv}.
Let
    \[
    \mathcal{A} = \{ {\bf x} \in \mathbb{N}^3 : x_1^2 + x_2^2 + x_3^2 = n \},
    \]
    and for \( {\bf l} \in \mathbb{N}^3 \) with square-free odd components, define
    \begin{align*}
    \mathcal{A}_{\bf l} = \{ {\bf x} \in \mathcal{A} : x_j\equiv0\pmod{\ell_j}, j=1,2,3 \}.
    \end{align*}
The genus average in \eqref{e66} provides the main term for $|\mathcal A_{\mathbf l}|$; see \cite{BB} for details.
For a positive integer $t$, define
\[
\mathcal{A}(t) = \mathcal{A}(t, n) := \sum_{\substack{ x_1^2 + x_2^2 + x_3^2 = n \\ t | x_1 x_2 x_3}} 1.
\]
For square-free $t$ and $\mathbf x\in\mathcal A$, the elementary identity
\[
\mathbf 1_{t\mid x_1x_2x_3}
=\mu(t)\sum_{\substack{\mathbf l\\ [\ell_1,\ell_2,\ell_3]=t\\ \ell_j\mid x_j\ (1\leq j\leq3)}}
\mu(\ell_1)\mu(\ell_2)\mu(\ell_3)
\]
follows by multiplicativity, since it is immediate for $t=p$. Consequently,
\begin{equation}\label{e:A_t}
\mathcal A(t)=
\mu(t)\sum_{\substack{\mathbf l\\ [\ell_1,\ell_2,\ell_3]=t}}
\mu(\ell_1)\mu(\ell_2)\mu(\ell_3)|\mathcal A_{\mathbf l}|.
\end{equation}
Using \eqref{e66}, the expected main term for $\mathcal A(t)$ is
\[
M_t=X\mu(t)\sum_{\substack{\mathbf l\\ [\ell_1,\ell_2,\ell_3]=t}}
\mu(\ell_1)\mu(\ell_2)\mu(\ell_3)
\frac{\lambda(\mathbf l,n)}{\ell_1\ell_2\ell_3}
\qquad \text{and} \quad
X=\frac{\pi\mathfrak S(n)n^{1/2}}4.
\]
Let
\begin{equation}\label{e46}
W(t)=t\mu(t)\sum_{\substack{\mathbf l\\ [\ell_1,\ell_2,\ell_3]=t}}
\mu(\ell_1)\mu(\ell_2)\mu(\ell_3)
\frac{\lambda(\mathbf l,n)}{\ell_1\ell_2\ell_3}.
\end{equation}
Then
\[
M_t=\frac{W(t)}tX.
\]
In particular, $W(p)$ is the same function as $\Omega(p)$ in \cite[(3.11)]{Lv}. By \cite[(3.13)]{Lv}, we have $0\leq W(p)<p$, and there is a constant $L\geq2$ such that
\begin{equation}\label{e:V1}
V_1(w_1,w_2)^{-1}:=
\prod_{w_1<p\leq w_2}\left(1-\frac{W(p)}p\right)^{-1}
\leq
\left(\frac{\log w_2}{\log w_1}\right)^3
\left(1+\frac L{\log w_1}\right)
\end{equation}
for $2\leq w_1<w_2$. We also write $V_1(z)=V_1(2,z)$.

\begin{lemma}\label{pro1}
Let $D< n^{3/116}.$ Then we have
\begin{align*}
E(D):=\sum_{t\leq D}\mu^{2}(t)4^{\nu(t)}\Big|\mathcal{A}(t)-\frac{W(t)}{t} X\Big|\ll n^{1/2 - \varepsilon}.
\end{align*}
\end{lemma}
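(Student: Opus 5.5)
The plan is to reduce $E(D)$ to Waibel's mean-value estimate for the remainder terms $|\mathcal A_{\mathbf l}|-r(\mathrm{gen}\,f_{\mathbf l},n)$ summed over vectors $\mathbf l$ with square-free odd components, where $f_{\mathbf l}$ is the diagonal form with matrix $Q$ built from $\mathbf l$.

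\textbf{Step 1: expand by \eqref{e:A_t}.} Write
\[
R_{\mathbf l}:=|\mathcal A_{\mathbf l}|-\frac{\lambda(\mathbf l,n)}{\ell_1\ell_2\ell_3}X .
\]
By \eqref{e:A_t} and the definition of $W(t)$ in \eqref{e46}, for square-free $t$,
\[
\mathcal A(t)-\frac{W(t)}tX=\mu(t)\sum_{[\ell_1,\ell_2,\ell_3]=t}\mu(\ell_1)\mu(\ell_2)\mu(\ell_3)R_{\mathbf l},
\]
so that
\[
E(D)\le \sum_{t\le D}\mu^2(t)4^{\nu(t)}\sum_{[\mathbf l]=t}|R_{\mathbf l}|.
\]
The number of $\mathbf l$ with $[\ell_1,\ell_2,\ell_3]=t$ is $7^{\nu(t)}$. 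Together with the weight $4^{\nu(t)}$, the total divisor-type weight attached to each $\mathbf l$ is $\ll (\ell_1\ell_2\ell_3)^{\varepsilon}\ll n^{\varepsilon}$, and it is harmless. If some $\ell_j$ is even, one must also handle $t$ divisible by $2$. Since $n\equiv 3\pmod 8$ forces all $x_i$ odd, $\mathcal A(t)=0$ for even $t$ and $W(2)=0$, so such $t$ contribute nothing.

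\textbf{Step 2: separate genus, spinor genus and class.} By \eqref{e66} the main term in $R_{\mathbf l}$ equals $r(\mathrm{gen}\,f_{\mathbf l},n)$ up to the normalisation of counting $\mathbb N^3$ versus $\mathbb Z^3$. This is a factor $8$, exact because all $x_i$ are nonzero for $n\equiv 3 \pmod 8$, and it is absorbed into $X$. Then
\[
R_{\mathbf l}=\bigl(r(f_{\mathbf l},n)-r(\mathrm{spn}\,f_{\mathbf l},n)\bigr)+\bigl(r(\mathrm{spn}\,f_{\mathbf l},n)-r(\mathrm{gen}\,f_{\mathbf l},n)\bigr).
\]
For the spinor-genus difference, the relevant spinor exceptions occur only when $n$ is, up to squares, in a specific square class related to the discriminant. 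The condition $5\nmid n$ and $n\equiv3\pmod{24}$, together with the analysis in \cite{BB,Lv}, shows that these terms vanish or are negligible for $\ell_j$ odd square-free. One only needs to invoke that analysis.

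\textbf{Step 3: apply Waibel's mean value.} For the class-versus-spinor-genus differences, apply \cite[Lemma 17]{Wa}, which is a Bombieri--Vinogradov type bound obtained from Cauchy--Schwarz, the Shimura lift and bounds for Fourier coefficients of cusp forms of weight $3/2$ on $\Gamma_0(4\ell_1^2\ell_2^2\ell_3^2)$. It gives
\[
\sum_{\ell_1\ell_2\ell_3\le D'}\bigl|r(f_{\mathbf l},n)-r(\mathrm{spn}\,f_{\mathbf l},n)\bigr|\ll n^{1/2-\delta},
\]
for a suitable range $D'$ (after dyadic decomposition in each $\ell_j$, Waibel controls boxes $\ell_j\sim L_j$). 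Since $\ell_1\ell_2\ell_3\le t^3\le D^3$, the condition $D<n^{3/116}$ must be matched with the admissible level in Waibel's lemma. The exponent $3/116$ presumably arises exactly from optimising over the worst box $L_1L_2L_3$ with $\max L_j\le D$ and $\prod L_j\le D^3$.

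I expect this step to be the main obstacle. Verifying that every dyadic box $(L_1,L_2,L_3)$ with $[\ell_1,\ell_2,\ell_3]\le D$ lies within the range where Waibel's bound saves a power $n^{-\delta}$ with $\delta>\varepsilon$ requires tracking his exponents carefully. I would first check the extreme boxes: all $L_j=D$ (largest level, $D^3$), and one $L_j=D$ with the others equal to $1$. I would then sum the $O(\log^3 n)$ boxes, absorbing the $n^{\varepsilon}$ weights into the power saving.
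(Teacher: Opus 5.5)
\textbf{There is a gap.} The only substantive point is left unproved, so the outline does not yet prove the lemma, although it rests on the same source as the paper. The paper does not re-derive this lemma; it quotes \cite[Lemma 17]{Wa} directly. The notation $\mathcal A(t)$, $W(t)$, $X$ of Section 2 (taken from \cite{Lv}) is set up precisely so that Waibel's mean-value estimate, with its range $D<n^{3/116}$, applies verbatim. You instead treat \cite[Lemma 17]{Wa} as an intermediate bound over an unspecified range $\ell_1\ell_2\ell_3\le D'$. You then say the threshold $3/116$ ``presumably'' comes out of a box-by-box optimisation, which you call the main obstacle and do not carry out. So nothing in Steps 1--3 shows that all $\mathbf l$ with $[\ell_1,\ell_2,\ell_3]\le D<n^{3/116}$ are admissible. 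You should either quote Waibel's lemma in the form in which it is stated, which makes Steps 1--2 superfluous, or actually do the computation.

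\textbf{If you do the computation, several heuristics need correcting.}

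\emph{The level.} The level of $Q=\mathrm{diag}(2\ell_1^2,2\ell_2^2,2\ell_3^2)$ is $N=4[\ell_1,\ell_2,\ell_3]^2=4t^2$, not $4\ell_1^2\ell_2^2\ell_3^2$. So the natural parameter is the lcm, not the product.

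\emph{The extreme boxes.} Neither of the two boxes you propose to check is the bottleneck. For $\mathbf l=(t,t,t)$ the form is a rescaled sum of three squares, whose genus has one class, so the error term vanishes. For $\mathbf l=(1,1,t)$ you are in the one-dimensional situation of Lemma~\ref{lem1}, which is admissible up to $n^{1/34}$.

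\emph{The binding case.} The one that matters is $\ell_1\le\ell_2\le\ell_3$ pairwise coprime with $\ell_j\asymp t^{1/3}$.
\begin{itemize}
\item Here $\det Q=8t^2$, and $F_W(Q,3/2)\asymp\det Q$ holds.
\item The term $N/\sqrt{a_2a_3}\asymp t^{4/3}$ dominates Waibel's diagonal norm bound, giving $\|\mathcal F\|\ll t^{2/3+\varepsilon}$.
\item The coefficient estimate used in Lemma~\ref{lem1} then gives an error $\ll t^{8/21}n^{13/28+\varepsilon}$.
\item Summing over $t\le D$ gives $D^{29/21}n^{13/28+\varepsilon}$, which is $O(n^{1/2-\delta})$ exactly for $D<n^{3/116}$.
\end{itemize}

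\emph{Shared primes.} Configurations in which the $\ell_j$ share primes need separate handling. There $\gcd(2\ell_i^2,2\ell_j^2)=2(\ell_i,\ell_j)^2$ is unbounded, so $F_W\asymp\det Q$ is not available. This case-by-case work is what \cite[Lemma 17]{Wa} packages.

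\emph{The spinor step.} In Step 2 the spinor-genus term is identically zero. For odd $\ell_j$ the genus of $\mathrm{diag}(\ell_1^2,\ell_2^2,\ell_3^2)$ is a single spinor genus, as the paper uses for $f_d$. The hypothesis $5\nmid n$ plays no role in this. Its job is to guarantee $W(5)<5$.
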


\noindent{\it Proof}. See \cite[Lemma 17]{Wa}.

We shall also need a one-dimensional version of the preceding setup. Let $m$ be an integer not of the form $4^k(8\ell+7)$, and take $\mathbf l=(1,1,d)$ with $d$ odd. For the form
\[
f_d(\mathbf x)=x_1^2+x_2^2+d^2x_3^2,
\]
and (\ref{e66}) gives
\begin{equation}\label{e67}
r(\mathrm{gen}\,f_d,m)=\frac{\pi}{4}\frac{\omega(d,m)}{d}\mathfrak S(m)m^{1/2},
\end{equation}
where $\omega(d,m)$ is a multiplicative function of $d$ satisfying $0\leq \omega(p,m)<p$ and
\begin{equation}\label{e71}
\prod_{w_1\leq p<w_2}\left(1-\frac{\omega(p,m)}p\right)^{-1}
\leq
\left(\frac{\log w_2}{\log w_1}\right)
\left(1+\frac L{\log w_1}\right)
\end{equation}
for $2\leq w_1<w_2$, where $L$ is an absolute constant. This is the one-dimensional analogue of \eqref{e66}.

Define
\[
\mathcal B=\{(x_1,x_2,x_3)\in\mathbb N_0^3:x_1^2+x_2^2+x_3^2=m\}
\]
and, for odd $d$,
\[
\mathcal B_d=\{(x_1,x_2,x_3)\in\mathcal B:x_3\equiv0\pmod d\}.
\]

\begin{lemma}\label{lem1}
Let $r(\mathrm{gen}\,f_d,m)$ be defined as before. For odd $d$, let
\[
\mathbf R(d,m)=|\mathcal B_d|-r(\mathrm{gen}\,f_d,m).
\]
For every fixed $0<\theta<1/34$, there is a constant $\delta=\delta(\theta)>0$ such that
\[
\sum_{d\leq m^\theta}\widetilde\mu^2(d)4^{\nu(d)}|\mathbf R(d,m)|
\ll m^{1/2-\delta},
\]
where
\[
\widetilde\mu(d)=
\begin{cases}
\mu(d),&2\nmid d,\\
0,&2\mid d.
\end{cases}
\]
\end{lemma}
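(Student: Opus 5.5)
This is the one-dimensional analogue of Lemma~\ref{pro1}, so I would follow Waibel's proof of \cite[Lemma 17]{Wa}, specialized to the forms $f_d$ with $\mathbf l=(1,1,d)$.

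\emph{Step 1: splitting the error.} Up to the restriction to non-negative coordinates, which changes counts only by a bounded factor and lower-order terms on the coordinate axes, $|\mathcal B_d|$ equals $r(f_d,m)$. I would write
\[
r(f_d,m)-r(\mathrm{gen}\,f_d,m)=\big(r(f_d,m)-r(\mathrm{spn}\,f_d,m)\big)+\big(r(\mathrm{spn}\,f_d,m)-r(\mathrm{gen}\,f_d,m)\big).
\]

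\emph{Step 2: the spinor-genus versus genus term.} For ternary forms this difference vanishes unless $m$ lies in one of finitely many square classes $tq^2$ attached to spinor exceptional numbers of $\mathrm{gen}\,f_d$. In that case it is bounded by $m^{\varepsilon}$ times a power of $d$. Summed over $d\le m^\theta$, this contributes $m^{O(\theta)+\varepsilon}$, which is far smaller than $m^{1/2-\delta}$. Alternatively one may note, as Banerjee and Blomer--Br\"udern do, that for this diagonal family the exceptional classes do not affect $m$ in the admissible classes.

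\emph{Step 3: the cusp-form term.} The difference $r(f_d,m)-r(\mathrm{spn}\,f_d,m)$ is the $m$-th Fourier coefficient of a weight-$3/2$ cusp form $g_d$ of level $4d^2$, orthogonal to the unary theta series. Via the Shimura lift and Waldspurger/Kohnen--Zagier, or directly via Iwaniec-type bounds for half-integral weight coefficients, one gets
\[
|a_{g_d}(m)|\ll m^{1/2-1/28+\varepsilon}\,\|g_d\|\,d^{A}.
\]
Bounding the Petersson norm $\|g_d\|$ polynomially in $d$ is exactly the content of Waibel's diagonal norm estimate \cite[Theorem 2]{Wa}. Summing over $d\le m^\theta$ with the weight $4^{\nu(d)}\ll m^\varepsilon$ then gives
\[
m^{1/2-1/28+\varepsilon+c\theta}
\]
for an explicit constant $c$. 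The condition $\theta<1/34$ should be exactly what makes the exponent $-1/28+c\theta$ negative. Any $\delta$ smaller than the resulting saving works.

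\emph{Main obstacle.} The delicate step is Step 3: obtaining the polynomial dependence on $d$ uniformly, with the exponents matching $\theta<1/34$. I would take these exponents from Waibel's Theorem~2 with $\ell_1=\ell_2=1$ and $\ell_3=d$, rather than re-derive them. Once these uniform bounds are in place, summing over $d$ is trivial.
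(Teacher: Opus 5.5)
Your Step~2 rests on a false estimate. When spinor exceptions are present, $r(\mathrm{spn}\,f_d,m)-r(\mathrm{gen}\,f_d,m)$ is the $m$-th coefficient of a combination of weight-$3/2$ unary theta series $\sum_k\psi(k)k\,e(tk^2z)$. At $m=tq^2$ it is of order $q\asymp(m/t)^{1/2}$, not $m^{\varepsilon}d^{A}$, and it can be comparable to $r(\mathrm{gen}\,f_d,m)$ itself. Summing such terms over $d\le m^\theta$ would destroy the saving $m^{-\delta}$.

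Your fallback sentence reaches the right conclusion, but the reason has nothing to do with restricting $m$; the setup only assumes $m\neq4^k(8\ell+7)$. The actual reason, which the paper uses (citing O'Meara), is that $\mathrm{gen}\,f_d$ consists of a single spinor genus. Locally:
\begin{itemize}
\item for odd $p\mid d$, the lattice $\langle1,1\rangle\perp\langle d^2\rangle$ has a unimodular Jordan component of rank $2$;
\item for odd $p\nmid d$, it is unimodular of rank $3$;
\item over $\mathbb Z_2$, it is isometric to $\langle1,1,1\rangle$ because $d^2\equiv1\pmod{8}$.
\end{itemize}
In every case the spinor norms of $O^+(L_p)$ contain $\mathbb Z_p^\times$, and over $\mathbb Q$ this gives $\mathrm{spn}\,f_d=\mathrm{gen}\,f_d$. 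So the Step~2 term vanishes identically, and this observation should replace your argument.

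With that fixed, Step~3 is exactly the paper's proof, and your guessed exponents are correct. You should pin them down rather than write ``should be''.
\begin{itemize}
\item Waibel's Theorem~2 supplies only the norm. Since $F_W(Q_d,3/2)\asymp d^2$ and $N_d=4d^2$, it gives $\langle\mathcal F_d,\mathcal F_d\rangle\ll d^{1+\varepsilon}$, i.e.\ $\|\mathcal F_d\|\ll d^{1/2+\varepsilon}$.
\item The dependence on $m$ and on the level comes from the coefficient bound in the proof of Waibel's Lemma~13, namely $\|\mathcal F_d\|\,(m^{13/28}N_d^{-1/7}+m^{7/16}N_d^{-1/16}+\cdots)(mN_d)^{\varepsilon}$.
\item The dominant term is therefore $d^{3/14}m^{13/28}$, which corresponds to your $A=-2/7$. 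Summing over $d\le m^\theta$ gives $m^{13/28+17\theta/14+\varepsilon}$, and this is $\ll m^{1/2-\delta}$ exactly when $\theta<1/34$.
\item The remaining two terms only require $\theta<1/22$ and $\theta<1/12$.
\end{itemize}
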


\begin{proof}
Because of the factor $\widetilde\mu^2(d)$, it is enough to consider square-free odd $d$. By O'Meara \cite[Theorem 102:10]{OM}, the genus of
\(
f_d(\mathbf x)
\)
contains only one spinor genus. Hence
\[
r(\mathrm{spn}\,f_d,m)=r(\mathrm{gen}\,f_d,m).
\]
Let 
\[
\theta(Q,z):=\sum_{{\bf x} \in \Z^3} e(f({\bf x}))=\sum_{m \geq1}r(Q,z)e(mz).
\]
In view of \eqref{e67}, the error $\mathbf R(d,m)$ is bounded, up to the harmless convention of signs and zero coordinates, by the $m$-th Fourier coefficient of
\[
\theta(Q_d,z)-\theta(\mathrm{spn}\,Q_d,z),
\qquad
Q_d=\operatorname{diag}(2,2,2d^2).
\]
Let $N_d$ be the level of $Q_d$, that is, the smallest positive integer such that $N_dQ_d^{-1}$ has integral entries and even diagonal entries. Hence, the level of $Q_d$ is
\[
N_d=4d^2,
\qquad
\det Q_d=8d^2.
\]
We now use the diagonal norm estimate of Waibel, namely Theorem 2 in \cite{Wa}. For
\[
\mathcal F_d(z)=\theta(Q_d,z)-\theta(\mathrm{gen}\,Q_d,z)=\theta(Q_d,z)-\theta(\mathrm{spn}\,Q_d,z),
\]
which gives, by taking $g=\theta(Q_d,z)-\theta(\mathrm{spn}\,Q_d,z)$ in \cite[Theorem 2]{Wa},
\begin{align*}
\langle\mathcal F_d,\mathcal F_d\rangle
&\ll
\left(
\frac{N_d^{3/2}}{F_W(Q_d,3/2)}+
\frac{N_d}{\sqrt{a_3a_2}}
\right)N_d^\varepsilon,
\end{align*}
where $a_1=a_2=2$ and $a_3=2d^2$, and $F_W(Q,\cdot)$ is the genus-invariant factor defined in \cite[(6)]{Wa}. In particular, \cite[Theorem 2]{Wa} implies
$F_W(Q,s/2)\asymp\det Q$ when the greatest common divisor of any $\lfloor s/2\rfloor+1$ of the diagonal entries is bounded by an absolute constant. Since the greatest common divisor of any two diagonal entries of $Q_d$ is at most $2$, Waibel's Theorem 2 gives
\[
F_W(Q_d,3/2)\asymp \det Q_d\asymp d^2.
\]
Consequently
\[
\langle\mathcal F_d,\mathcal F_d\rangle
\ll
\left(\frac{(4d^2)^{3/2}}{d^2}+
\frac{4d^2}{\sqrt{(2d^2)\cdot2}}
\right)d^\varepsilon
\ll d^{1+\varepsilon},
\]
and therefore
\begin{equation}\label{e:diag-norm}
\|\mathcal F_d\|\ll d^{1/2+\varepsilon}.
\end{equation}
Moreover,
$\theta(Q_d,z)-\theta(\mathrm{spn}\,Q_d,z)$ is the component of $\mathcal F_d$ lying in the orthogonal complement of the unary theta subspace, and so it satisfies the same norm bound as in \eqref{e:diag-norm}.

We next insert \eqref{e:diag-norm} into Waibel's coefficient estimate for this orthogonal complement, namely the estimate used in the proof of \cite[Lemma 13]{Wa} by replacing \cite[Theorem 1]{Wa} with \cite[Theorem 2]{Wa} in his proof. This yields
\begin{align}\label{e:Rpointwise-new}
\mathbf R(d,m)
\ll d^{1/2+\varepsilon}
\left(
\frac{m^{13/28}}{N_d^{1/7}}
+
\frac{m^{7/16}}{N_d^{1/16}}
+
 m^{1/4}
\frac{\sqrt{(\widetilde m,N_d^\infty)}\,v^{1/4}\sqrt{(m,N_d)}}{\sqrt{N_d}}
\right)(mN_d)^\varepsilon.
\end{align}
Here $\widetilde m$ and $v$ are the same quantities as in \cite[Lemma 13]{Wa}. Since
\[
(\widetilde m,N_d^\infty)\leq N_d\ll d^2,
\qquad
(m,N_d)\leq N_d\ll d^2,
\qquad
v\leq N_d\ll d^2,
\]
we obtain the pointwise estimate
\begin{equation}\label{e:Rpointwise}
\mathbf R(d,m)
\ll
\left(d^{3/14}m^{13/28}+d^{3/8}m^{7/16}+d^2m^{1/4}\right)m^\varepsilon.
\end{equation}
Finally, $4^{\nu(d)}\ll d^\varepsilon$. Summing \eqref{e:Rpointwise} over $d\leq m^\theta$ gives
\begin{align*}
\sum_{d\leq m^\theta}\widetilde\mu^2(d)4^{\nu(d)}|\mathbf R(d,m)|
&\ll
m^{13/28+17\theta/14+\varepsilon}
+m^{7/16+11\theta/8+\varepsilon}
+m^{1/4+3\theta+\varepsilon}.
\end{align*}
The first exponent is the decisive one, and
\[
13/28+17\theta/14<1/2
\]
is equivalent to $\theta<1/34$. The remaining two inequalities are weaker. Thus, for every fixed $0<\theta<1/34$, choosing $\varepsilon>0$ sufficiently small gives the asserted bound with some $\delta>0$.
\end{proof}

\begin{lemma}\label{lem3}
Let $m$ be a sufficiently large positive integer. Let $\epsilon_1, \epsilon_2>0$ and  $x^{\epsilon_1} \leq z \leq y \leq x^{\frac{1}{2}-\epsilon_2}$,
then there exists some $\epsilon>0$ such that
$$\sum_{z<p\leq y}\sum_{\substack{x_1^2+x_2^2+x_3^2 = m \\ x_1x_2x_3 \equiv 0 \pmod{p^2}}} 1\ll m^{1/2-\varepsilon}.$$
\end{lemma}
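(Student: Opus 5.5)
Since $p$ is prime and $p^2\mid x_1x_2x_3$, either $p^2\mid x_j$ for some $j$, or $p$ divides two different coordinates. The plan is to bound the inner sum by these two events and count each one directly with a pointwise estimate, not with the sieve. Here $x$ in the statement should be read as a power of $m$, presumably $x=m^{1/2}$, the size of the coordinates; the only input we need is $p\le y\le m^{1/4-\epsilon_2/2}$ and $p>z\ge m^{\epsilon_1/2}$.

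\emph{Case 1: $p\mid x_i$ and $p\mid x_j$ with $i\neq j$.} Say $p\mid x_1$ and $p\mid x_2$, and write $x_1=pu$, $x_2=pv$. Then $p^2(u^2+v^2)=m-x_3^2$. For each of the $O(m^{1/2})$ values of $x_3$ with $x_3^2\le m$, we have two possibilities.

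If $m-x_3^2\neq 0$, then $m-x_3^2$ has at most $m^{\varepsilon}$ divisors. For each admissible $p$, the number of pairs $(u,v)$ is at most $r_2((m-x_3^2)/p^2)\ll m^{\varepsilon}$, and only the primes $p$ with $p^2\mid m-x_3^2$ contribute. Summing over $p$ and $x_3$ gives a total of $\ll m^{1/2+2\varepsilon}$, which is not quite enough.

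Better is to sum over $p$ first: for fixed $p$ the number of pairs $(x_1,x_2)$ with $p\mid x_1,x_2$ and $x_1^2+x_2^2\le m$ is $\ll m/p^2$. For each such pair there are at most two choices of $x_3$. So Case 1 contributes
\[
\sum_{z<p\le y}\frac{m}{p^2}\ll \frac{m}{z}.
\]
This is too weak when $z$ is small, so the two counts have to be combined. Given $(x_1,x_2)$ with $p\mid x_1,x_2$, the point $(x_1/p,x_2/p)$ lies on a circle of radius $\le \sqrt{m}/p$, and $x_3$ is determined up to sign. Hence the count is at most $\sum_{k\le m/p^2}r_2(k)\,\mathbf 1_{m-p^2k=\square}$. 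I plan to bound this by counting $x_3$ with $x_3^2\equiv m\pmod{p^2}$, of which there are $\ll (\sqrt m/p^2+1)\,m^{\varepsilon}$ when $p\nmid m$, each weighted by $r_2\ll m^\varepsilon$. This gives $\ll m^{1/2+\varepsilon}/p^2+m^{\varepsilon}$, and summing over $z<p\le y$ gives $\ll m^{1/2+\varepsilon}/z+y\,m^{\varepsilon}\ll m^{1/2-\varepsilon'}$. If $p\mid m$, then $p\mid x_3$ is forced modulo $p^2$, so that case reduces to Case 2 below.

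\emph{Case 2: $p^2\mid x_j$ for a single $j$, say $x_3=p^2w$.} For fixed $p$, there are $\ll \sqrt m/p^2+1$ choices of $w$. For each, the number of $(x_1,x_2)$ with $x_1^2+x_2^2=m-p^4w^2$ is $\ll m^{\varepsilon}$ by the divisor bound for $r_2$. The exception is $m-p^4w^2=0$, which cannot happen for $p\le y<m^{1/4}$ unless $m$ is a square; even then it gives only $O(1)$ solutions per $p$. Summing over $p$ gives
\[
\sum_{z<p\le y}\left(\frac{m^{1/2+\varepsilon}}{p^2}+m^{\varepsilon}\right)\ll \frac{m^{1/2+\varepsilon}}{z}+y\,m^{\varepsilon},
\]
which is $\ll m^{1/2-\varepsilon'}$ because $z\ge m^{\epsilon_1/2}$ and $y\le m^{1/4}$.

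The main obstacle is Case 1, which needs a bound uniform in $p$ rather than the crude bound $m/p^2$. The step that resolves it is to parametrize by $x_3$ in residue classes modulo $p^2$ and apply the divisor bound for $r_2$. The prime $p=2$, and primes $p\mid m$, need separate handling through the congruence $x_3^2\equiv m\pmod{p^2}$. The condition $y\le x^{1/2-\epsilon_2}$ is exactly what makes the $\sum_p m^{\varepsilon}$ term acceptable, and $z\ge x^{\epsilon_1}$ saves the power needed in the main term.
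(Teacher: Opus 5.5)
Your split into the two events, and your treatment of them, follow the paper's proof. Case 2 is the paper's $T_1$. Your final plan for Case 1 with $p\nmid m$ is the paper's estimate for $T_2$: only $O(1)$ residue classes $x_3 \bmod p^2$ satisfy $x_3^2\equiv m$, and each is weighted by $r_2\ll m^{\varepsilon}$.

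\textbf{The gap: the subcase $p\mid m$ of Case 1.} You say that there $p\mid x_3$ is forced, ``so that case reduces to Case 2.'' It does not. From $p\mid x_1$, $p\mid x_2$ and $p\mid m$ you only get $p\mid x_3$ and $p^2\mid m$, not $p^2\mid x_j$ for some $j$. For example, $m=3p^2$ and $\mathbf x=(p,p,p)$ satisfy $p^2\mid x_1x_2x_3$ and lie in Case 1. No coordinate is divisible by $p^2$, so your Case 2 count never sees these triples.

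The Case 2 method also does not transfer with modulus $p$ in place of $p^2$. Counting $x_3\equiv 0\pmod p$ gives $\ll(\sqrt m/p+1)m^{\varepsilon}$ triples for each prime. Summing $\sqrt m/p$ over $z<p\le y$ gives $\sqrt m\,(\log\log y-\log\log z+o(1))$, which is of size $\sqrt m$ when $z$ and $y$ are different fixed powers of $m$. So there is no power saving.

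\textbf{The missing idea, which the paper uses.} This subcase only involves primes $p\mid m$ with $p>z\ge m^{\epsilon_1}$ (or $m^{\epsilon_1/2}$ in your reading). There are at most $O(1/\epsilon_1)$ such primes, since their product divides $m$. Hence the subcase contributes
$\ll m^{\varepsilon}\sum_{p\mid m,\ p>z}(\sqrt m/p+1)\ll m^{1/2+\varepsilon}/z+m^{\varepsilon}$, which is acceptable.

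With this step inserted, your argument is complete and coincides with the paper's. Your remark about $p=2$ needing separate handling is unnecessary, since every $p$ in the sum exceeds $z$, which is a power of $m$.
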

\begin{proof}
By symmetry,
\[
\sum_{z<p\leq y}\sum_{\substack{x_1^2+x_2^2+x_3^2 = m \\ x_1x_2x_3 \equiv 0 \, (p^2)}} 1 \ll \sum_{z<p\leq y}\sum_{\substack{x_1^2+x_2^2+x_3^2 = m \\ x_3 \equiv 0 \pmod{p^2}}} 1 + \sum_{z<p\leq y}\sum_{\substack{x_1^2+x_2^2+x_3^2 = m \\ x_1 \equiv x_2 \equiv 0 \pmod{p}}} 1 =: T_1 + T_2.
\]
Since $x_3 \leq m^{1/2}$, the condition $p^2 \mid x_3$ implies that $p \leq m^{1/4}$. Hence
\begin{align}\label{20}
T_1
&\leq
\sum_{z<p\leq m^{1/4}}
\sum_{\substack{x_3\leq m^{1/2}\\ x_3\equiv0\pmod {p^2}}}
\sum_{x_1^2+x_2^2=m-x_3^2}1 \notag\\
&\ll
m^\varepsilon
\sum_{z<p\leq m^{1/4}}
\sum_{\substack{x_3\leq m^{1/2}\\ x_3\equiv0\pmod {p^2}}}1
\ll
m^\varepsilon\left(m^{1/2}z^{-1}+m^{1/4}\right)
\ll m^{1/2-\varepsilon}.
\end{align}
For $T_2$, write $k=a^2+b^2$. Then, for any $\epsilon'>0$,
\begin{align*}
T_2
&=\sum_{z<p\leq y}\sum_{p^2k+c^2=m}r_2(k)\\
&\ll m^{\epsilon'}
\left(
\sum_{\substack{z<p\leq y\\ p\mid m}}
\sum_{\substack{c<m^{1/2}\\ c^2\equiv m\pmod {p^2}}}1
+
\sum_{\substack{z<p\leq y\\ p\nmid m}}
\sum_{\substack{c<m^{1/2}\\ c^2\equiv m\pmod {p^2}}}1
\right),
\end{align*}
where
\begin{align*}
r_{2}(k)=\#\{(a,b)\in \mathbb{Z}^{2}:a^{2}+b^{2}=k\}.
\end{align*}
If $p\nmid m,$ write $$m=pn_{0}+r^{2}\ \ (0<r \leq p-1).$$
Hence $$c=pc_{0}+r\ \ \mbox{or}\ \ c=pc_{0}-r$$ for some non-negative integer $c_{0}.$ Note that $p^{2}|m-c^{2},$ so we have
$$p|n_{0}+2c_{0}r\ \ \mbox{or}\ \ p|n_{0}-2c_{0}r.$$ In either case, there exists a corresponding constant $b\in \Z_p^{*}$ such that $c_{0}\equiv b\pmod p.$ Let $c_{0}=pc_{1}+b,$ then we have
$$c=p(pc_{1}+b)+ r=p^{2}c_{1}+pb+r\ \ \mbox{or}\ \ c=p^{2}c_{1}+pb-r.$$
Therefore
\[
\#\{c \leq m^{1/2}: c^2 \equiv m \pmod{p^2}\} \ll \frac{m^{1/2}}{p^2}+1. 
\]
Thus, for some $\epsilon>0$,
\begin{equation}\label{e62}
\sum_{\substack{z<p\leq y\\ p\nmid m}}
\sum_{\substack{c<m^{1/2}\\ c^2\equiv m\pmod {p^2}}}1
\ll
\sum_{z<p\leq y}\left(\frac{m^{1/2}}{p^2}+1\right)
\ll m^{1/2-\varepsilon}.
\end{equation}
If $p\mid m$, then $c\equiv0\pmod p$. Since $p>z\geq m^{\epsilon_1}$, the number of such prime divisors $p$ of $m$ is $O_{\epsilon_1}(1)$. Therefore
\begin{equation}\label{e63}
\sum_{\substack{z<p\leq y\\ p\mid m}}
\sum_{\substack{c<m^{1/2}\\ c^2\equiv m\pmod {p^2}}}1
\ll
\sum_{\substack{z<p\leq y\\ p\mid m}}
\left(\frac{m^{1/2}}p+1\right)
\ll \frac{m^{1/2}}z+1
\ll m^{1/2-\varepsilon}.
\end{equation}
Combining \eqref{20}, \eqref{e62}, and \eqref{e63} proves the lemma.
\end{proof}

We conclude this section by recalling the sieve lemma used below. Let $\mathcal P$ be a finite set of primes, let $z\geq2$, and let $\mathcal E$ be a finite set of positive integers. Define
\[
P(z)=\prod_{\substack{p\in \mathcal{P} \\ p \leq  z}}p \quad \text{and} \quad S(\mathcal{E},\mathcal{P},z)=\!\!\!\sum_{\substack{t\in\mathcal{E} \\ (t,P(z))=1}}\!\!\!1.
\]
Let $\mathcal E_d=\{t\in\mathcal E:t\equiv0\pmod d\}$. Suppose that
\[
\#\mathcal E_d=\frac{X\omega(d)}d+R_d,
\]
where $\omega(d)$ is multiplicative and there are constants $\kappa,A>0$ such that
\begin{equation}\label{e73}
\prod_{w_1<p\leq w_2}
\left(1-\frac{\omega(p)}p\right)^{-1}
\leq
\left(\frac{\log w_2}{\log w_1}\right)^\kappa
\left(1+\frac A{\log w_1}\right)
\end{equation}
for $2\leq w_1<w_2$.

\begin{lemma}[The sieve lemma]\label{p1}
Suppose that $\kappa\geq1$ and that $2\kappa$ is an integer.
If $(\ref{e73})$ holds and $D$ is a parameter such that $2\le z \le D$, then we have
\begin{equation*}
S(\mathcal{E}, \mathcal{P}, z)
\le X V(z) \Big(
F_{\kappa}\!\Big( \frac{\log D}{\log z} \Big)+ \varepsilon\Big)
+ O\Big(\sum_{\substack{t \mid P(z) \\ t < D}} 4^{\nu(t)} |R_{d}|\Big)
,
\end{equation*}
and
\begin{equation*}
S(\mathcal{E}, \mathcal{P}, z)
\ge X V(z) \Big(f_{\kappa}\!\Big( \frac{\log D}{\log z} \Big)- \varepsilon\Big)
+O\Big( \sum_{\substack{t \mid P(z) \\ t < D}} 4^{\nu(t)} |R_{d}|\Big)
,
\end{equation*}
where
\begin{equation}\label{e:V}
V(z))=\prod_{ p \leq z}\left(1-\frac{\omega(p)}{p}\right),
\end{equation}
$F_{\kappa}(s)$ and $f_{\kappa}(s)$ are the upper and lower bound functions of the higher-dimensional sieve method,
and the constants implied by the $O$-notation depend at most on $\kappa$ and $A$.
\end{lemma}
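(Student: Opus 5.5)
This is the standard higher-dimensional sieve theorem of Diamond--Halberstam--Richert, and the proof I have in mind is essentially a reduction to it. The plan is to derive Lemma \ref{p1} from the general form of their result rather than rebuild the sieve from scratch. Two things have to be checked. First, the hypotheses here, namely multiplicativity of $\omega$, $0\le\omega(p)<p$ implicit in \eqref{e73}, the dimension condition \eqref{e73} with sieve dimension $\kappa$, and level $D\ge z$, are exactly those of the DHR sieve. Second, the remainder term produced by the construction can be majorised by the stated sum with weights $4^{\nu(t)}$. In the main-term comparison, the restriction $2\kappa\in\mathbb Z$ is only needed so that the functions $F_\kappa,f_\kappa$ are the tabulated DHR functions; the sieve inequalities themselves hold for every $\kappa\ge1$.

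The key steps, in order, are as follows.

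\textbf{Step 1.} Take the DHR upper and lower sieve weights $\lambda^{\pm}_t$, supported on $t\mid P(z)$ with $t<D$, satisfying $|\lambda^\pm_t|\le1$. Their construction gives
\[
\sum_{t\mid (n,P(z))}\lambda^-_t\le \mathbf 1_{(n,P(z))=1}\le \sum_{t\mid (n,P(z))}\lambda^+_t .
\]

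\textbf{Step 2.} Sum these inequalities over $n\in\mathcal E$ and insert $\#\mathcal E_t=X\omega(t)/t+R_t$. This gives a main term $X\sum_t\lambda^\pm_t\omega(t)/t$ and an error $\sum_t|\lambda^\pm_t||R_t|$. The error is trivially at most the stated $O$-term, since $4^{\nu(t)}\ge1$. (The $R_d$ in the statement is evidently $R_t$.) Some formulations of the Selberg $\Lambda^2$ upper bound produce weights bounded only by $3^{\nu(t)}$, and the $4^{\nu(t)}$ weight comfortably absorbs those as well.

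\textbf{Step 3.} Invoke the DHR main-term asymptotic. Under \eqref{e73}, with $s=\log D/\log z$,
\[
\sum_t\lambda^+_t\frac{\omega(t)}t\le V(z)\bigl(F_\kappa(s)+O((\log D)^{-c})\bigr),
\]
and the analogous lower bound holds with $f_\kappa$. For $D$ large the $O((\log D)^{-c})$ term is below $\varepsilon$. When $z$ and $D$ are small relative to the constant $A$, the statement is trivial after adjusting the implied constant. This yields both inequalities of the lemma, with $V(z)$ as in \eqref{e:V}.

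The only genuine obstacle is Step 3: the fact that the main-term sum converges to $F_\kappa$ and $f_\kappa$, which are defined by the delay-differential system with the DHR sieve parameters $\alpha_\kappa,\beta_\kappa$, requires the full DHR theory, namely the Ankeny--Onishi/Rosser-type iteration and the analysis of the adjoint functions. I would not reprove this. I would cite Diamond--Halberstam (\emph{A Higher-Dimensional Sieve Method}, Theorem 0.1/Chapter 9), verifying only that their condition $\Omega(\kappa)$ coincides with \eqref{e73} and that $\kappa\ge1$ lies within their range of validity.
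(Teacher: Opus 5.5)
Your proposal matches the paper, which proves this lemma simply by citing Diamond--Halberstam \cite[Theorem 9.1]{DH}; your Steps 1--3 only unpack what that theorem supplies. One small correction: the DHR weights are not bounded by $1$, since they come from Selberg's $\Lambda^2$ weights fed into a Buchstab-type iteration, and that is exactly where the $4^{\nu(t)}$ in the remainder comes from. Because you take the remainder term directly from the cited theorem, this does not affect your argument.
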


\noindent{\it Proof}. See \cite[Theorem 9.1]{DH}.

\section{Proofs of the Theorems}\label{S:pv_thms}
\setcounter{lemma}{0}
\setcounter{theorem}{0}
\setcounter{corollary}{0}
\setcounter{remark}{0}
\setcounter{equation}{0}
We shall use a weighted sieve of Richert type, in the form employed by Cai \cite{Cai}. For parameters $y_i$ and $z_i$ $(i=1,2)$ to be chosen below, define
\begin{equation}\label{e42}
g_i(r)=
\sum_{\substack{p\mid r\\ z_i\leq p<y_i}}
\left(1-\frac{\log p}{\log y_i}\right).
\end{equation}
All numerical values of the sieve functions $f_\kappa$ and $F_\kappa$ used below were computed with Galway's Mathematica package \cite{Ga}, as in the standard implementation described in \cite{DH}. In the applications below, $P(z)$ denotes the product of the primes in the relevant sieving set; in particular, in the one-dimensional sieve only odd primes are used.

\begin{proof}[Proof of Theorems \ref{thm1.2} and \ref{thm1.3}]
Let
\[
\eta_1=\frac3{116},
\qquad
D_1=n^{\eta_1-\varepsilon},
\qquad
z_1=D_1^{5/133},
\qquad
y_1=D_1^{529/540}.
\]
Let $0<\theta_1<1$ be chosen below. Consider
\begin{align}\label{e53}
H
&=\sum_{\substack{x_1^2+x_2^2+x_3^2=n\\ (x_1x_2x_3,P(z_1))=1}}
\left(1-\theta_1g_1(x_1x_2x_3)\right) \notag\\
&=H_1-\theta_1H_2.
\end{align}
By Lemmas \ref{p1} and \ref{pro1}, together with \eqref{e:V1}, we have
\begin{align}\label{e54}
H_1
&\geq
XV_1(z_1)\left(f_3\left(\frac{\log D_1}{\log z_1}\right)-\varepsilon\right)
\geq 0.99999XV_1(z_1),
\end{align}
where
\[
V_1(z_1)=\prod_{2<p<z_1}\left(1-\frac{W(p)}p\right)
\gg \frac1{(\log n)^3},
\qquad
X=\frac{\pi\mathfrak S(n)n^{1/2}}4\gg n^{1/2-\varepsilon}.
\]
Similarly,
\begin{align}\label{e55}
H_2
&=\sum_{z_1\leq p<y_1}
\left(1-\frac{\log p}{\log y_1}\right)
\sum_{\substack{x_1^2+x_2^2+x_3^2=n\\ (x_1x_2x_3,P(z_1))=1\\ x_1x_2x_3\equiv0\pmod p}}1 \notag\\
&\leq
XV_1(z_1)
\sum_{z_1\leq p<y_1}
\frac{W(p)}p
\left(1-\frac{\log p}{\log y_1}\right)
\left(F_3\left(\frac{\log(D_1/p)}{\log z_1}\right)+\varepsilon\right) \notag\\
&\leq
3XV_1(z_1)
\left(
\int_{5/133}^{529/540}
\left(1-\frac{540}{529}t\right)
\frac{F_3(26.6(1-t))}{t}\,dt+\varepsilon
\right).
\end{align}
The last step is the standard partial-summation step in the weighted sieve, using \eqref{e:V1}; compare \cite[Lemma 4.1]{Ir}. The numerical computation gives
\begin{equation}\label{e56}
\int_{5/133}^{529/540}
\left(1-\frac{540}{529}t\right)
\frac{F_3(26.6(1-t))}{t}\,dt
\leq 2.62214.
\end{equation}
Choosing $\theta_1=0.12712$ and combining \eqref{e53}--\eqref{e56}, we obtain
\begin{equation}\label{e57}
H\geq0.000010689\,XV_1(z_1).
\end{equation}

Let $H^+$ be the partial sum of $H$ over those triples for which
\[
1-\theta_1g_1(x_1x_2x_3)>0.
\]
Since each summand in $H^+$ is at most $1$, \eqref{e57} implies that the number of such triples is
\begin{equation}\label{e58}
\gg XV_1(z_1)\gg n^{1/2-\varepsilon}.
\end{equation}
By Lemma \ref{lem3}, the contribution of triples for which $p^2\mid x_1x_2x_3$ for some prime $z_1\leq p<y_1$ is $O(n^{1/2-\delta})$ for some $\delta>0$. Hence, after decreasing $\varepsilon$ if necessary, there are still $\gg n^{1/2-\varepsilon}$ triples counted by $H^+$ for which
\begin{equation}\label{squarefree-medium}
p^2\nmid x_1x_2x_3\qquad(z_1\leq p<y_1).
\end{equation}
For any such triple, the positivity of the weight gives
\begin{align*}
\Omega(x_1x_2x_3)
&\leq
\sum_{\substack{z_1\leq p<y_1\\ p\mid x_1x_2x_3}}1
+
\sum_{\substack{p\geq y_1\\ p^a\parallel x_1x_2x_3}}\frac{\log p}{\log y_1} \\
&<
\sum_{\substack{z_1\leq p<y_1\\ p\mid x_1x_2x_3}}
\frac{\log p}{\log y_1}
+\frac1{\theta_1}
+
\sum_{\substack{p\geq y_1\\ p^a\parallel x_1x_2x_3}}
\frac{\log p}{\log y_1} \\
&\leq
\frac{\log(x_1x_2x_3)}{\log y_1}+\frac1{\theta_1}
\leq
\frac1{\theta_1}+\frac{810}{529}(\eta_1-\varepsilon)^{-1}
<67.07264.
\end{align*}
Since $\Omega(x_1x_2x_3)$ is an integer, this proves Theorem \ref{thm1.2}.

The same triples also prove Theorem \ref{thm1.3}. Indeed, for $1\leq i\leq3$ we have
\[
g_1(x_i)\leq g_1(x_1x_2x_3),
\]
and, using \eqref{squarefree-medium} as above,
\begin{align*}
\Omega(x_i)
&<
\frac{\log x_i}{\log y_1}+\frac1{\theta_1}
\leq
\frac1{\theta_1}+\frac{270}{529}(\eta_1-\varepsilon)^{-1}
<27.60194.
\end{align*}
Thus each $x_i$ is a $P_{27}$-number, and \eqref{e58} gives the asserted lower bound for the number of such representations.
\end{proof}

\begin{proof}[Proof of Theorem \ref{thm1.1}]
Let
\[
\eta_2=\frac1{34},
\qquad
D_2=m^{\eta_2-\varepsilon},
\qquad
z_2=D_2^{10/51},
\qquad
y_2=D_2^{54/55}.
\]
Let $0<\theta_2<1$ be chosen below, and recall that
\[
g_2(u)=\sum_{\substack{p\mid u\\ z_2\leq p<y_2}}
\left(1-\frac{\log p}{\log y_2}\right).
\]
Consider the weighted sifted sum
\begin{align}\label{e15}
\Sigma
&=\sum_{\substack{x_1^2+x_2^2+x_3^2=m\\ (x_3,P(z_2))=1}}
\left(1-\theta_2g_2(x_3)\right) \notag\\
&=\Sigma_1-\theta_2\Sigma_2.
\end{align}
Here $\Sigma_1$ counts representations with no prime divisor less than $z_2$ in $x_3$, while $\Sigma_2$ subtracts a controlled weight from those representations for which $x_3$ has a prime divisor in $[z_2,y_2)$.
The product estimate \eqref{e71} shows that this is a sieve of dimension one. Applying Lemma \ref{p1} with Lemma \ref{lem1} gives
\begin{align}\label{e16}
\Sigma_1
&\geq
r_3(m)V_2(z_2)
\left(f_1\left(\frac{\log D_2}{\log z_2}\right)-\varepsilon\right),
\end{align}
where
\[
V_2(z_2)=\prod_{2<p<z_2}
\left(1-\frac{\omega(p,m)}p\right)
\gg\frac1{\log m}
\]
by \eqref{e71}. Since
\[
\frac{\log D_2}{\log z_2}=\frac{51}{10}=5.1,
\]
the numerical values of the linear sieve functions give
\begin{equation}\label{e16a}
\Sigma_1\geq0.99865\,r_3(m)V_2(z_2).
\end{equation}
For $\Sigma_2$ we first expose the prime $p\mid x_3$ in the range $z_2\leq p<y_2$ and then sieve the remaining condition $(x_3,P(z_2))=1$. Lemmas \ref{p1} and \ref{lem1} give
\begin{align}\label{e17}
\Sigma_2
&=\sum_{z_2\leq p<y_2}
\left(1-\frac{\log p}{\log y_2}\right)
\sum_{\substack{x_1^2+x_2^2+x_3^2=m\\ (x_3,P(z_2))=1\\ x_3\equiv0\pmod p}}1 \notag\\
&\leq
r_3(m)V_2(z_2)
\sum_{z_2\leq p<y_2}
\frac{\omega(p,m)}p
\left(1-\frac{\log p}{\log y_2}\right)
\left(F_1\left(\frac{\log(D_2/p)}{\log z_2}\right)+\varepsilon\right)+O(m^{1/2-\delta}).
\end{align}
Writing $p=D_2^t$, we have
\[
\frac{\log(D_2/p)}{\log z_2}=\frac{1-t}{10/51}=5.1(1-t),
\qquad
1-\frac{\log p}{\log y_2}=1-\frac{55}{54}t.
\]
Using partial summation together with \eqref{e71}, the last prime sum is bounded by
\begin{align}\label{e17a}
\Sigma_2
&\leq
r_3(m)V_2(z_2)
\left(
\int_{10/51}^{54/55}
\left(1-\frac{55}{54}t\right)
\frac{F_1(5.1(1-t))}{t}\,dt
+\varepsilon
\right).
\end{align}
The numerical computation of the sieve functions gives
\begin{equation}\label{e18}
\int_{10/51}^{54/55}
\left(1-\frac{55}{54}t\right)
\frac{F_1(5.1(1-t))}{t}\,dt
\leq1.11531.
\end{equation}
Choose
\[
\theta_2=0.89540.
\]
Combining \eqref{e15}, \eqref{e16a}, \eqref{e17a}, and \eqref{e18}, and taking $\varepsilon>0$ sufficiently small, gives
\begin{equation}\label{e19}
\Sigma
\geq
\left(0.99865-0.89540\cdot1.11531+O(\varepsilon)\right)r_3(m)V_2(z_2)
\geq0.000001426\,r_3(m)V_2(z_2).
\end{equation}
Let $\Sigma^+$ be the part of $\Sigma$ supported on triples satisfying
\[
1-\theta_2g_2(x_3)>0.
\]
Since every positive summand is at most $1$, \eqref{e19}, Siegel's lower bound for $r_3(m)$, and $V_2(z_2)\gg1/\log m$ imply
\begin{equation}\label{21}
\#\{\text{triples counted by }\Sigma^+\}
\gg r_3(m)V_2(z_2)
\gg m^{1/2-\varepsilon}.
\end{equation}
We next discard the triples for which $p^2\mid x_3$ for some prime $z_2\leq p\leq m^{1/4}$. By Lemma \ref{lem3}, this discarded set has cardinality $O(m^{1/2-\delta})$ for some $\delta>0$. Hence there remain $\gg m^{1/2-\varepsilon}$ triples for which
\[
(x_3,P(z_2))=1,
\qquad
p^2\nmid x_3\quad(z_2\leq p\leq m^{1/4}),
\qquad
1-\theta_2g_2(x_3)>0.
\]
Write
\[
x_3=2^az,
\qquad
2\nmid z.
\]
Then $(z,P(z_2))=1$. Moreover, the preceding square-factor condition implies that the odd integer $z$ is square-free: indeed, if $p^2\mid z$, then $p\geq z_2$, while $p>m^{1/4}$ is impossible because $p^2\leq z\leq x_3\leq m^{1/2}$.

For the remaining triples, the inequality $1-\theta_2g_2(x_3)>0$ gives
\[
\sum_{\substack{z_2\leq p<y_2\\p\mid z}}1
<
\frac1{\theta_2}
+
\sum_{\substack{z_2\leq p<y_2\\p\mid z}}
\frac{\log p}{\log y_2}.
\]
Since $z$ is square-free and has no prime divisor below $z_2$, it follows that
\begin{align}\label{e64}
\Omega(z)
&=\sum_{\substack{p\geq z_2\\ p\mid z}}1 \notag\\
&<
\sum_{\substack{z_2\leq p<y_2\\p\mid z}}
\frac{\log p}{\log y_2}
+\frac1{\theta_2}
+
\sum_{\substack{p\geq y_2\\p\mid z}}
\frac{\log p}{\log y_2} \notag\\
&\leq
\frac{\log z}{\log y_2}+\frac1{\theta_2}
\leq
\frac1{\theta_2}+\frac{55}{108}(\eta_2-\varepsilon)^{-1}
<18.5.
\end{align}
In the penultimate inequality we used $z\leq x_3\leq m^{1/2}$ and
$\log y_2=(54/55)(\eta_2-\varepsilon)\log m$. Since $\Omega(z)$ is an integer, \eqref{e64} gives $\Omega(z)\leq18$. Thus $z$ is a $P_{18}$-number. The lower bound \eqref{21} gives the asserted number of representations, and the theorem follows.
\end{proof}


\begin{thebibliography}{KMP}
\bibitem{Ban} S. Banerjee, {\it On a conjecture of Sun about sums of restricted squares,} J. Number Theory,  {\bf256} (2024), 253-289.
\bibitem{Blo} V. Blomer, {\it Ternary quadratic forms, and sums of three squares with restricted variables,} CRM Proc. Lect. Notes, {\bf46} (2008), 1-17.
\bibitem{BB} V. Blomer and J. Br\"udern, {\it A three squares theorem with almost primes,} Bull. London Math. Soc., {\bf37} (2005), 507-513.
\bibitem{Br} J. Br\"udern and E. Fouvry, {\it Lagrange's four squares theorem with almost prime variables,} J. reine angew Math., {\bf454} (1994), 59-96.
\bibitem{Cai2} Y. Cai, {\it Lagrange's four squares theorem with variables of special type}, Int. J. Number Theory {\bf6} (2010), 1801-1817.
\bibitem{Cai} Y. Cai, {\it Gauss's three squares theorem involving almost-primes,} Rocky Mountain J. Math., {\bf 42} (2012), no. 4, 1115-1134.
\bibitem{DH} H. G. Diamond and H. Halberstam, {\it A Higher-Dimensional Sieve Method,} Cambridge Tracts in Mathematics, {\bf177} (Cambridge University Press, Cambridge, 2008), with an appendix `Procedures for computing sieve functions' by W. F. Galway.
\bibitem{Ga} W. Galway, http://www.math.uiuc.edu/SieveTheoryBook/SieveFunctions.m.
\bibitem{Gre} G. Greaves, {\it On the representation of a number in the form $x^2+y^2+p^2+q^2$ where $p,q$ are odd primes}, Acta Arith. {\bf29} (1976), 257-274.
\bibitem{HR} H. Halberstam, D. R. Heath-Brown and H. E. Richert, {\it Almost--primes in short intervals,} in: Recent Progress in Analytic Number Theory \uppercase\expandafter{\romannumeral1}, Academic Press, 1981, 69-103.
\bibitem{HT} D. R. Heath-Brown and D. I. Tolev, {\it Lagrange's four squares theorem with one prime and three almost prime variables}, J. Reine Angew. Math. {\bf558} (2003), 159-224.
\bibitem{Ir} A. J. Irving, {\it Almost-prime values of polynomials at prime arguments,} Bull. Lond. Math. Soc., {\bf47} (2015), 593-606.
\bibitem{Iw1} H. Iwaniec, {\it Rosser's sieve,} Acta Arith., {\bf36} (1980) 171-202.
\bibitem{Iw2} H. Iwaniec, {\it A new form of the error term in the linear sieve,} Acta Arith., {\bf37} (1980), 307-320.
\bibitem{Lv} G. L\"u, {\it Gauss's three squares theorem with almost prime variables,} Acta. Arith., {\bf128} (2007), 391-399.
\bibitem{OM} O. T. O'Meara, {\it Introduction to Quadratic Forms}, Springer, 1973.
\bibitem{Sun2}  Z.-W. Sun, {\it New Conjectures in Number Theory and Combinatorics,} (in Chinese), Harbin Institute
of Technology Press, Harbin, 2021.
\bibitem{Sun0} Z.-W. Sun, Sequence A308661 at OEIS (On-Line Encyclopedia of Integer Sequences), http://oeis.org/A308661.
\bibitem{Ri} H.-E. Richert, {\it Selberg's sieve with weights}, Mathematika, {\bf16} (1969), 1-22.
\bibitem{Si} C. Siegel, {\it \"Uber die Klassenzahl algebraischer Zahlenk\"orper,} Acta Arith., {\bf1} (1935) 83-86.
\bibitem{Wa} F. Waibel, {\it Uniform bounds for norms of theta series and arithmetic applications}, Math. Proc. Cambridge Phil. Soc., {\bf173} (2022), 660-691.
\end{thebibliography}
\end{document}